\numberwithin{equation}{section}
\numberwithin{figure}{section}
\theoremstyle{plain}
\newtheorem{thm}{\protect\theoremname}
\theoremstyle{plain}
\newtheorem{prop}[thm]{\protect\propositionname}
\theoremstyle{plain}
\newtheorem{lem}[thm]{\protect\lemmaname}
\providecommand{\lemmaname}{Lemma}
\providecommand{\propositionname}{Proposition}
\providecommand{\theoremname}{Theorem}
\begin{document}
\title[Convex spacelike hypersurface with boundary]{Convex spacelike hypersurface of constant curvature with boundary on a hyperboloid}
\author{Shanze Gao}
\address{School of Mathematics and Information Science\\ Guangxi University\\ Nanning 530004, Guangxi, People's Republic of China}
\email{gaoshanze@gxu.edu.cn}

\keywords{spacelike hypersurface, constant mean curvature, hyperboloid}
\subjclass[2020]{53C50; 53C42; 35N25}

\begin{abstract}
We consider convex, spacelike hypersurfaces with boundaries on some hyperboloid (or lightcone) in the Minkowski space. If the hypersurface has constant higher order mean curvature, and the angle between the normal vectors of the hypersurface and the hyperboloid (or the lightcone) is constant on the boundary, then the hypersurface must be a part of another hyperboloid.
\end{abstract}
\maketitle

\section{Introduction}

Let $\mathbb{R}^{n,1}$ denote the Minkowski space, which is $\mathbb{R}^{n+1}$ endowed with the Lorentzian metric \[ \bar{g}=dx_{1}^{2}+\cdots+dx_{n}^{2}-dx_{n+1}^{2}.\]

Let $M^{n}$ ($n\geq2)$ be a smooth, connected hypersurface in $\mathbb{R}^{n,1}$. It is called \emph{spacelike} if the induced metric $g$ of $M$ is Riemannian. We say that $M$ is \emph{convex} if its principal curvatures $(\lambda_{1},...,\lambda_{n})$ are all positive. For example, the upper half of the unit hyperboloid \[\Sigma=\left\{x\in\mathbb{R}^{n,1}\Big|x_{n+1}=\sqrt{1+x_{1}^{2}+\cdots+x_{n}^{2}}\right\} \] is a spacelike and convex hypersurface.

A general (upper half) hyperboloid $\widetilde{\Sigma}$ is obtained from $\Sigma$ by translation and dilation, i.e., $\widetilde{\Sigma}=\mu\Sigma+a$ for some $\mu\in (0,+\infty)$ and $a\in\mathbb{R}^{n,1}$. The light cone \[ \left\{x\in\mathbb{R}^{n,1}\Big|x_{n+1}=\sqrt{x_{1}^{2}+\cdots+x_{n}^{2}}\right\} \] can been seen as the limit of $\widetilde{\Sigma}$ as $\mu \rightarrow 0$. So let $\widetilde{\Sigma}$ denote either a hyperboloid or a lightcone for convenience.

In this paper, we consider a convex, spacelike hypersurface $M$ with boundary $\partial M$ on $\widetilde{\Sigma}$, which means $\partial M\subset\widetilde{\Sigma}$ and $M$ is in the convex hull of $\widetilde{\Sigma}$ (see Figure \ref{fig}).
\begin{figure}[h]
\begin{tikzpicture}[scale=0.5]
\draw[->] (0,6) -- (0,7) node[right]{$x_{n+1}$};

\draw[domain=-2.5:2.5, smooth, variable=\t] 
plot ({sinh(\t)}, {cosh(\t)});
\node at (5,6) {$\widetilde{\Sigma}$};

\coordinate (A) at ({sinh(-2)}, {cosh(-2)});
\coordinate (B) at (0,3);
\coordinate (C) at ({sinh(2.1)}, {cosh(2.1)});
\draw[smooth] plot coordinates {(A) (B) (C)};
\node at (0,3.5) {$M$};
\end{tikzpicture}	
\caption{$M$ is on $\widetilde{\Sigma}$.}
\label{fig}
\end{figure}
Then we can define a function $\theta$, called the \emph{angle function}, on $\partial M$ by \[ \theta(p)=\bar{g}(N(p),\widetilde{N}(p))\quad\text{for }p\in\partial M,\] where $N$ and $\widetilde{N}$ are unit normal vectors of $M$ and $\widetilde{\Sigma}$ respectively. It is related to the angle between the normal vectors of $M$ and $\widetilde{\Sigma}$.

The $k$-th mean curvature $H_{k}$ (also called higher order mean curvature) of $M$ is defined by \[
H_{k}=\frac{1}{\binom{n}{k}}\sigma_{k}(\lambda),\] where $\sigma_{k}(\lambda)$ is the $k$-th elementary symmetric polynomial of principal curvatures $\lambda=(\lambda_{1},...,\lambda_{n})$. Our main result in this paper is on uniqueness of hypersurfaces in the Minkowski space with constant $k$-th mean curvature and boundary conditions. 

\begin{thm}\label{thm}
Suppose $M$ is a convex, spacelike hypersurface in $\mathbb{R}^{n,1}$ with boundary $\partial M$ on a hyperboloid or a lightcone $\widetilde{\Sigma}$. If the $k$-th mean curvature of $M$ is constant, and the angle function $\theta$ is constant, then $M$ must be a part of another hyperboloid.
\end{thm}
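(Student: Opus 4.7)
\medskip
\noindent\textbf{Proof proposal.} My plan is to follow the Ros/Brendle strategy of combining a Minkowski-type integral identity with a Heintze--Karcher-type inequality, adapted to the Lorentzian setting and to the capillary-type boundary condition.

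\smallskip
\noindent\emph{Step 1: Choice of a conformal vector field.} Write $\widetilde\Sigma=\mu\Sigma+a$ (with $\mu=0$ for the lightcone) and introduce the position-type vector field $X(x)=x-a$. Since translations and dilations are conformal in $\mathbb{R}^{n,1}$, $X$ is a conformal Killing field with $\bar D_Y X=Y$. On $\widetilde\Sigma$ the vector $X$ is a (future-directed, resp.\ null) normal, so $X$ is parallel to $\widetilde N$ there. Let $N$ be the future-directed timelike unit normal of $M$, set $u=-\bar g(X,N)$ (the ``support function''), and decompose $X=X^T+uN$ with $X^T\in TM$. Convexity plus the fact that $M$ lies in the convex hull of $\widetilde\Sigma$ should give $u>0$.

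\smallskip
\noindent\emph{Step 2: Minkowski-type formulas and boundary reduction.} Using the Newton tensors $T_{k-1}=\sigma_{k-1}I-\sigma_{k-2}h+\cdots$, which are divergence-free for a hypersurface in a flat ambient space, and the fact that $\bar\nabla X=\mathrm{id}$, I will compute
\[
\operatorname{div}_M\bigl(T_{k-1}(X^T)\bigr)=k\sigma_{k-1}-k\sigma_k\,u .
\]
Integrating and using the divergence theorem produces, for each $k$,
\[
k\int_M\bigl(\sigma_{k-1}-\sigma_k u\bigr)\,dA=\int_{\partial M}\langle T_{k-1}(X^T),\nu\rangle\,ds,
\]
where $\nu$ is the outward conormal of $\partial M$ in $M$. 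The boundary integrand is where the two boundary hypotheses enter. On $\partial M\subset\widetilde\Sigma$ the vector $X$ is tangent to $\widetilde\Sigma$ up to the direction $\widetilde N$, and I would use the orthogonal frame $\{N,\nu\}$ versus $\{\widetilde N,\widetilde\nu\}$ in the $2$-plane normal to $\partial M$ to expand $X^T$ in terms of $\widetilde N,\widetilde\nu$, $\theta$, and $u$. The constancy of $\theta$ will let me factor scalar constants out of the boundary integrals and identify them, for the base case $k=1$, with a tractable expression involving only $\theta$ and the geometry of $\partial M\subset\widetilde\Sigma$.

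\smallskip
\noindent\emph{Step 3: A Heintze--Karcher--Ros-type inequality and rigidity.} Following the Brendle/Montiel--Ros approach, I expect to establish
\[
\int_M\frac{H_{k-1}}{H_k}\,u\,dA\;\geq\;\int_M u\cdot(\text{something involving }u)\,dA+\mathcal{B},
\]
where $\mathcal{B}$ is a boundary term that, when $\theta$ is constant and $\partial M\subset\widetilde\Sigma$, is non-positive, and where equality holds precisely when every principal curvature of $M$ is the same constant, i.e.\ $M$ is totally umbilic in $\mathbb{R}^{n,1}$. The proof proceeds by solving an auxiliary Dirichlet problem (or by the usual Brendle trick with $u/H_1$) and applying Maclaurin's inequality $H_{k-1}H_{k+1}\leq H_k^2$ in the inductive step. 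Combining this inequality with the Minkowski identity of Step 2 at level $k$, in which $H_k$ is pulled out as a constant, will force the inequality to be an equality. The umbilic condition then identifies $M$ with a piece of a hyperboloid, as desired.

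\smallskip
\noindent\emph{Main obstacle.} The routine part is the Minkowski identity; the delicate part is the Heintze--Karcher--Ros inequality with the correct \emph{boundary correction}. One has to choose the auxiliary PDE (and its boundary condition tied to $\theta$) so that the boundary contribution has a definite sign under the capillary condition, and so that the equality case survives as ``totally umbilic.'' Controlling signs in the Lorentzian setting, where the normal is timelike and several familiar inequalities reverse, is where I expect the real work to lie.
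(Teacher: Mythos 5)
Your proposal is a plan rather than a proof, and the plan leaves unestablished exactly the ingredient on which everything hinges. Step 2 (the Minkowski-type identity from $\operatorname{div}_M(T_{k-1}(X^T))$) is indeed routine, modulo a coefficient slip: with the conventions of the paper the trace identities give $(n-k+1)\sigma_{k-1}-k\sigma_k u$, not $k\sigma_{k-1}-k\sigma_k u$, unless you pass to normalized curvatures. But Step 3, the Heintze--Karcher--Ros inequality with a boundary correction of a definite sign under the constant-angle condition on a hyperboloid, is only ``expected''; you do not specify the auxiliary Dirichlet problem, the boundary condition tied to $\theta$, the sign of $\mathcal{B}$, or why Maclaurin's inequality closes the induction. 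In the Lorentzian setting the Montiel--Ros/Brendle mechanism (sweeping out an enclosed region by normal geodesics) has to be rebuilt from scratch because the normal is timelike and the relevant region is not enclosed by $M$ alone but by $M$ together with a piece of $\widetilde\Sigma$; whether the boundary term has the sign you need when $\partial M$ sits on a hyperboloid (rather than a hyperplane) is precisely the open question in your outline. As written, the argument cannot be checked or completed.

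For comparison, the paper avoids Heintze--Karcher entirely and instead follows Weinberger's treatment of Serrin's overdetermined problem. It uses convexity to invert the Gauss map, parametrizes $M$ over a domain $\Omega$ in the unit hyperboloid $\Sigma$ via the support function $u$, and exploits that the inverse shape operator is $B=(D_iu^j-u\delta_i^j)$ with divergence-free Newton tensors on $\Sigma$. Two integrations by parts against the weight $\Phi=z_{n+1}$ (which satisfies $D_iD^j\Phi=\Phi\delta_i^j$ on $\Sigma$) produce an integral identity relating $\langle X,X\rangle-c_1$ and $\langle X,N\rangle-c_2$; this is then pulled back to $M$. The rigidity comes from applying the strong maximum principle to $P=\tfrac12\langle X,X\rangle-\langle X,N\rangle$, whose $(\sigma_k)_i^j$-Hessian is nonnegative by Newton--MacLaurin, and the integral identity rules out the strict-inequality alternative. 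If you want to salvage your route, you would need to actually prove the Lorentzian capillary Heintze--Karcher inequality on $\widetilde\Sigma$, which is a substantial piece of work not contained in your sketch.
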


In the Euclidean space, Liebmann \cite{Liebmann} shows that the only closed, convex hypersurface with constant mean curvature is the sphere. The convexity assumption can be replaced by some topological condition such as embeddedness (see \cite{Aleksandrov,Montiel-Ros,Hopf} etc.). Since the hyperboloid is the counterpart of the sphere in the Minkowski space, Theorem \ref{thm} can be seen as an analog for spacelike hypersurfaces with boundaries. Uniqueness of compact spacelike hypersurfaces in some spacetimes with constant higher order mean curvatures have been considered in \cite{Montiel,Aias-Colares} etc. 

The boundary condition in Theorem \ref{thm} actually includes two parts, $\partial M\subset\widetilde{\Sigma}$ and $\theta$ is constant. It is necessary since examples of non-hyperboloid hypersurfaces in the Minkowski space with constant $k$-th mean curvature have been show in \cite{Treibergs,LiAnMin,Wang-Xiao} etc. The problem with similar boundary conditions is also considered in \cite{Gao24}, where the boundary is in a hyperplane with constant angle condition. From a perspective of PDEs, the boundary condition can be called overdetermined boundary values condition, which is studied by Serrin \cite{Serrin} for elliptic equations initially. Curvature equations with overdetermined boundary values are studies in \cite{Jia,GMY,GJZ} etc.

The strategy of the proof is inspired by Weinberger's approach \cite{Weinberger} to the Serrin's results. The key is to establish an integral equality which is related to the boundary condition. Since the spacelike hypersurface is convex, the desired equality can be established on $\Sigma$ by using its umbilical structure, and then pulled back to the hypersurface via the Gauss map. Thus we prove that the hypersurface must be a part of a hyperboloid by applying the strong maximum principle to an auxiliary function and using the equality.

The paper is organized as follows. We recall some facts on spacelike hypersurfaces in the Minkowski space and the curvature functions in Section \ref{sec:pre}. We establish integral equalities in Section \ref{sec:inteq}. We finish the proof of Theorem \ref{thm} in the last section.

\section{Preliminaries}\label{sec:pre}

\subsection{Vectors in the Minkowski space}

For convenience, write $\langle\cdot,\cdot\rangle=\bar{g}(\cdot,\cdot)$ for vectors in the Minkowski space $\mathbb{R}^{n,1}$, i.e., \[\langle x,y\rangle=x_{1}y_{1}+\cdots+x_{n}y_{n}-x_{n+1}y_{n+1}\] for $x=(x_{1},...,x_{n},x_{n+1}),y=(y_{1},...,y_{n},y_{n+1})$. 

A vector $x\in\mathbb{R}^{n,1}$ is called spacelike, timelike or lightlike if $\langle x,x\rangle>0$, $\langle x,x\rangle<0$ or $\langle x,x\rangle=0$ respectively.

Let $\mathbb{R}_{+}^{n,1}$ denote the upper half of the Minkowski space, i.e., 
\[ \mathbb{R}_{+}^{n,1}:=\left\{ x\in\mathbb{R}^{n,1}|x_{n+1}>0\right\} .\]

\begin{prop}\label{prop:vect}
Given $x,y\in\mathbb{R}_{+}^{n,1}$. If $x$ is timelike and $y$ is timelike or lightlike, then $\langle x,y\rangle<0$.
\end{prop}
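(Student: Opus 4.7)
The plan is to decompose both vectors into their spatial and time parts and combine the timelike/lightlike conditions with the ordinary Cauchy--Schwarz inequality in $\mathbb{R}^n$. Write $x=(x',x_{n+1})$ and $y=(y',y_{n+1})$ with $x',y'\in\mathbb{R}^n$, so that
\[
\langle x,y\rangle = x'\cdot y' - x_{n+1}y_{n+1},
\]
where $x'\cdot y'$ denotes the Euclidean inner product.

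First I would translate the hypotheses into inequalities on the components. The assumption $x\in\mathbb{R}^{n,1}_+$ gives $x_{n+1}>0$, and $x$ being timelike means $|x'|^2-x_{n+1}^2<0$, hence $|x'|<x_{n+1}$ strictly. Similarly, $y_{n+1}>0$ and $y$ being timelike or lightlike gives $|y'|\le y_{n+1}$.

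Next I would apply Cauchy--Schwarz on $\mathbb{R}^n$ to bound the spatial part: $x'\cdot y'\le |x'|\,|y'|$. Combining with the two inequalities above,
\[
|x'|\,|y'| \le |x'|\,y_{n+1} < x_{n+1}\,y_{n+1},
\]
where the second inequality is strict because $|x'|<x_{n+1}$ and $y_{n+1}>0$. Hence $x'\cdot y'<x_{n+1}y_{n+1}$, which yields $\langle x,y\rangle<0$.

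There is no real obstacle here; the only delicate point is making sure the strict inequality survives the chain of estimates. The strictness is guaranteed by $x$ being genuinely timelike (so $|x'|<x_{n+1}$) together with $y_{n+1}>0$, which together force the product inequality to be strict regardless of whether $y$ is timelike or merely lightlike (and even if $y'=0$, in which case $\langle x,y\rangle=-x_{n+1}y_{n+1}<0$ directly).
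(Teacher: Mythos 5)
Your proof is correct and follows essentially the same route as the paper: decompose into spatial and time components, apply the Euclidean Cauchy--Schwarz inequality, and track where the strict inequality comes from. You are in fact slightly more careful than the paper about the degenerate case $y'=0$ and about which hypothesis forces strictness.
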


\begin{proof}
Write $x=(x',x_{n+1})$$,$ where $x'=(x_{1},...,x_{n})$. If $x\in\mathbb{R}_{+}^{n,1}$ is timelike, we know $0<|x'|<x_{n+1}$, where $|x'|=\sqrt{x_{1}^{2}+\cdots+x_{n}^{2}}$. 

Similarly, since $y\in\mathbb{R}_{+}^{n,1}$ is timelike or lightlike, we have $0<|y'|\leq y_{n+1}$.

The Cauchy-Schwarz inequality implies \[ \langle x,y\rangle\leq|x'||y'|-x_{n+1}y_{n+1}<0.\]
\end{proof}

\subsection{Gauss map}

Let $X:M^{n}\rightarrow\mathbb{R}^{n,1}$ be the immersion of a smooth, connected hypersurface $M$ in $\mathbb{R}^{n,1}$. In local coordinates, $\{\partial_{1}X,...,\partial_{n}X\}$ is a basis of tangent space $T_{p}M$ at $p\in M$. The induced metric of $M$ is \[ g_{ij}=\langle\partial_{i}X,\partial_{j}X\rangle.\]
By the definition, $g_{ij}$ is Riemannian (positive-definite at any $p\in M$) if $M$ is spacelike. This means all tangent vectors are spacelike and the normal vector is timelike.

For a connected, spacelike hypersurface $M$, we always choose a upward unit normal vector $N$, which means $N\in\mathbb{R}_{+}^{n,1}$ and $\langle N,N\rangle=-1$. As a result, the mapping $p\mapsto N(p)$ takes its values in the upper half of the unit hyperboloid \[ \Sigma=\{x\in\mathbb{R}^{n,1}|\langle x,x\rangle=-1\text{ and }x_{n+1}>0\}.\]
Similar to the case in the Euclidean space, the mapping $N:M\rightarrow\Sigma$ is called the \emph{Gauss map} of the spacelike hypersurface.

In local coordinates, the matrix $A=(h_{i}^{j})$ of the differential $dN$ is given by \[ dN(\partial_{i})=\partial_{i}N=h_{i}^{j}\partial_{j}X\quad\text{for }i=1,...,n,\] here and later, the repeated indexes are summed from $1$ to $n$ automatically. Since the principal curvatures $\lambda=(\lambda_{1},...,\lambda_{n})$ of $M$ are the eigenvalues of $A$, $dN$ is nondegenerate if $M$ is convex. 

Thus the Gauss map $N:M\rightarrow N(M)$ is a diffeomorphism from a convex, spacelike hypersurface $M$ to the image $N(M)$. Consequently, $M$ can be parametrized by $\Sigma$ in the sense of \[ X(p)=X\circ N^{-1}(z)=:X(z) \] for $p\in M$ and $z=N(p)\in\Sigma$. 

\subsection{Inverse of $dN$}

Define a function $u$ on $N(M)$ by \[u(z):=\langle X(z),z\rangle\quad\text{for }z\in N(M).\] The position vector $X$ of $M$ can be written by $u$ as 
\begin{equation}
X(z)=Du(z)-u(z)z,\label{eq:X=Du-uz}
\end{equation}
where $Du$ is the gradient of $u$. In local coordinates of $\Sigma$, \[Du=\hat{g}^{lj}u_{j}\partial_{l}z=u^{l}\partial_{l}z,\]where $u_{j}=\partial_{j}u$ and $\left(\hat{g}^{ij}\right)$ is the inverse matrix of the metric $\left(\hat{g}_{ij}\right)$ of $\Sigma$.

The second fundamental form $h=(h_{ij})$ of $M$ is defined by \[\partial_{i}\partial_{j}X-\Gamma_{ij}^{l}\partial_{l}X=h_{ij}N,\] where $\Gamma_{ij}^{l}$ is the Christoffel symbol of $M$ determined by $(g_{ij})$. From $\langle\partial_{l}X,N\rangle=0$, we can check that $h_{i}^{j}g_{jl}=h_{il}$.

Let $\hat{g}_{ij}$ and $\hat{h}_{ij}$ denote the metric and the second fundamental form of the hyperboloid $\Sigma$ respectively. Since all principal curvatures of $\Sigma$ are equal to $1$, we know $\hat{h}_{ij}=\hat{g}_{ij}$. Then
\[\partial_{i}\partial_{j}z=\hat{\Gamma}_{ij}^{l}\partial_{l}z+\hat{g}_{ij}z,\] where $z$ is the position vector and $\hat{\Gamma}_{ij}^{l}$ is the Christoffel symbol of $\Sigma$.

Differentiating \eqref{eq:X=Du-uz}, we get
\begin{align}
\partial_{i}X & =\partial_{i}u^{l}\partial_{l}z+u^{l}\partial_{i}\partial_{l}z-u_{i}z-u\partial_{i}z\nonumber \\
 & =\partial_{i}u^{l}\partial_{l}z+u^{l}\hat{\Gamma}_{il}^{m}\partial_{m}z-u\partial_{i}z\nonumber \\
 & =\left(D_{i}u^{l}-u\delta_{i}^{l}\right)\partial_{l}z,\label{eq:dX}
\end{align}
where $D$ denote the connection of $\Sigma$.

Denote \[b_{i}^{j}:=D_{i}u^{j}-u\delta_{i}^{j}.\] From \eqref{eq:dX} and $z=N$, we know the matrix $B=(b_{i}^{j})$
is the inverse of $A=(h_{i}^{j})$. Hence the eigenvalues of $B$ are reciprocals of the principal curvatures of $M$.

\subsection{Curvature function}

Define \[ \sigma_{k}(A):=\frac{1}{k!}\delta_{j_{1}\cdots j_{k}}^{i_{1}\cdots i_{k}}h_{i_{1}}^{j_{1}}\cdots h_{i_{k}}^{j_{k}}, \] where $\delta_{j_{1}\cdots j_{k}}^{i_{1}\cdots i_{k}}$ is the generalized Kronecker symbol defined by 
\[
\delta_{j_{1}\cdots j_{k}}^{i_{1}\cdots i_{k}}:=
\begin{cases}
1, & \text{if }(i_{1}\cdots i_{k})\text{ is an even permutation of }(j_{1}\cdots j_{k}),\\
-1, & \text{if }(i_{1}\cdots i_{k})\text{ is an odd permutation of }(j_{1}\cdots j_{k}),\\
0, & \text{otherwise}.
\end{cases}
\]
We also set $\sigma_{0}(A)=1$ and $\sigma_{k}(A)=0$ for $k<0$ and $k>n$ for convenience.

In fact,
\[
\sigma_{k}(A)=\sigma_{k}(\lambda):=\sum_{1\leq i_{1}<\cdots<i_{k}\leq n}\lambda_{i_{1}}\lambda_{i_{2}}\cdots\lambda_{i_{k}},
\]
since $\lambda=(\lambda_{1},...,\lambda_{n})$ are eigenvalues of $A$. We also know
\[
\sigma_{k}(A)=\frac{\sigma_{n-k}(B)}{\sigma_{n}(B)},
\]
since $B$ is the inverse of $A$. Then the $k$-th mean curvature of $M$ can be written by 
\[
H_{k}=\frac{1}{\binom{n}{k}}\sigma_{k}(A)=\frac{1}{\binom{n}{k}}\frac{\sigma_{n-k}(B)}{\sigma_{n}(B)}.
\]

Define
\[
(\sigma_{k}(A))_{j}^{i}:=\frac{\partial\sigma_{k}(A)}{\partial h_{i}^{j}}=\frac{1}{(k-1)!}\delta_{j_{1}\cdots j_{k-1}j}^{i_{1}\cdots i_{k-1}i}h_{i_{1}}^{j_{1}}\cdots h_{i_{k-1}}^{j_{k-1}}.
\]
It is clear that the matrix of $(\sigma_{k}(A))_{j}^{i}$ is positive definite if $M$ is convex.

The following identities will be used in later calculations (see \cite{Reilly} etc.).
\begin{prop}\label{prop:sigmak}
The following identities hold:
\begin{itemize}
\item[(i)] $(\sigma_{k}(A))_{j}^{i}h_{i}^{j}=k\sigma_{k}(A)$,
\item[(ii)] $(\sigma_{k}(A))_{j}^{i}\delta_{i}^{j}=(n-k+1)\sigma_{k-1}(A)$,
\item[(iii)] $(\sigma_{k}(A))_{j}^{i}h_{i}^{m}h_{m}^{j}=\sigma_{1}(A)\sigma_{k}(A)-(k+1)\sigma_{k+1}(A)$.
\end{itemize}
\end{prop}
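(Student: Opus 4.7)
All three identities are classical combinatorial facts about symmetric functions of a matrix, and the plan is to deduce them directly from the antisymmetry of the generalized Kronecker delta together with its standard contraction rule $\delta^{i_1 \cdots i_{k-1} l}_{j_1 \cdots j_{k-1} l} = (n-k+1)\,\delta^{i_1 \cdots i_{k-1}}_{j_1 \cdots j_{k-1}}$.

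For (i), I would substitute the definition of $(\sigma_k(A))_j^i$ into the left-hand side and absorb the extra factor $h_i^j$ into the product of $h$'s; this reproduces the sum that defines $\sigma_k(A)$ but with the prefactor $1/(k-1)!$ in place of $1/k!$, yielding $k\,\sigma_k(A)$. Equivalently, this is just Euler's identity for the homogeneous polynomial $\sigma_k$ of degree $k$ in the entries $h_i^j$.

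For (ii), writing out the definition, the left-hand side becomes
\[
\frac{1}{(k-1)!}\,\delta^{i_1 \cdots i_{k-1} i}_{j_1 \cdots j_{k-1} j}\,\delta_i^j\, h_{i_1}^{j_1}\cdots h_{i_{k-1}}^{j_{k-1}}.
\]
The contraction on the last index pair produces the factor $(n-k+1)$ together with a generalized Kronecker delta of order $k-1$, and the remaining sum is precisely $(k-1)!\,\sigma_{k-1}(A)$, giving the claim.

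For (iii) the cleanest route is to observe that both sides are polynomial in the entries of $A$ and invariant under change of frame (each is the trace of a tensorial contraction), so it is enough to verify the identity at a point where $A$ is diagonal, $h_i^j = \lambda_i\,\delta_i^j$. The statement then reduces to the symmetric-function identity
\[
\sum_i \sigma_{k-1}(\lambda \mid i)\,\lambda_i^{\,2} = \sigma_1(\lambda)\,\sigma_k(\lambda) - (k+1)\,\sigma_{k+1}(\lambda),
\]
where $\sigma_{k-1}(\lambda \mid i)$ denotes the $(k-1)$-th elementary symmetric polynomial in the $\lambda_j$'s with $\lambda_i$ removed. Applying the recursion $\sigma_k(\lambda) = \sigma_k(\lambda \mid i) + \lambda_i\,\sigma_{k-1}(\lambda \mid i)$ on each summand, together with the counting identity $\sum_i \lambda_i\,\sigma_k(\lambda \mid i) = (k+1)\,\sigma_{k+1}(\lambda)$ (each $(k+1)$-subset of $\{1,\dots,n\}$ is counted once per element), finishes the calculation. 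No step in this plan presents a serious obstacle; the only point requiring care is the tensorial reduction used for (iii), which is what legitimizes working at a diagonalizing basis.
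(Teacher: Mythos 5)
The paper does not actually prove this proposition; it simply cites Reilly and moves on. Your argument is a correct and complete substitute for that citation. Part (i) is exactly Euler's identity for the degree-$k$ homogeneous polynomial $\sigma_k$ (or, equivalently, the bookkeeping with the prefactors $1/k!$ versus $1/(k-1)!$), part (ii) follows from the stated contraction rule for the generalized Kronecker delta, and part (iii) is legitimately reduced to the diagonal case: both sides are traces of contractions of powers of $A$ against $\sigma_k'(A)$, hence conjugation-invariant polynomials in the entries of $A$ (and for the shape operator of a convex spacelike hypersurface, $A$ is in any case diagonalizable, being self-adjoint with respect to the Riemannian induced metric). The symmetric-function computation you then carry out, using $\sigma_k(\lambda)=\sigma_k(\lambda\mid i)+\lambda_i\sigma_{k-1}(\lambda\mid i)$ and $\sum_i\lambda_i\sigma_k(\lambda\mid i)=(k+1)\sigma_{k+1}(\lambda)$, is exactly right. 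The only thing worth adding for completeness is the observation that at a diagonal point the matrix $(\sigma_k(A))_j^i$ is itself diagonal with entries $\sigma_{k-1}(\lambda\mid i)$, which is what makes the left-hand side of (iii) collapse to $\sum_i\sigma_{k-1}(\lambda\mid i)\lambda_i^2$; this follows from the same Kronecker-delta expression you already wrote down.
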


Since the Minkowski space is flat, the Gauss equation for spacelike hypersurface $M$ in $\mathbb{R}^{n,1}$ is
\[
R_{ijm}^{l}=h_{im}h_{j}^{l}-h_{jm}h_{i}^{l},
\]
where the curvature tensor $R_{ijk}^{l}$ of $M$ is defined by
\[
R_{ijm}^{l}\partial_{l}=\nabla_{i}\nabla_{j}\partial_{m}-\nabla_{j}\nabla_{i}\partial_{m}.
\]

Thus 
\begin{align}
D_{l}b_{i}^{j}-D_{i}b_{l}^{j} & =D_{l}D_{i}u^{j}-D_{i}D_{l}u^{j}-u_{l}\delta_{i}^{j}+u_{i}\delta_{l}^{j}\nonumber \\
 & =\hat{R}_{lim}^{j}u^{m}-u_{l}\delta_{i}^{j}+u_{i}\delta_{l}^{j}=0,\label{eq:Db}
\end{align}
where $\hat{R}_{lim}^{j}=\delta_{i}^{j}\hat{g}_{lm}-\delta_{l}^{j}\hat{g}_{im}$
is the curvature tensor of $\Sigma$. As a consequence of \eqref{eq:Db} (see \cite{Reilly}),
we know
\begin{equation}
D_{i}(\sigma_{l}(B))_{j}^{i}=0\quad\text{for any }j\in\{1,...,n\}.\label{eq:divB}
\end{equation}

\section{Integral equalities}\label{sec:inteq}

Define a positive function $\Phi(z)$ on a smooth domain $\Omega\subset\Sigma$ by
\[
\Phi(z):=z_{n+1}=-\langle z,E_{n+1}\rangle,
\]
where $E_{n+1}=(0,...,0,1)\in\mathbb{R}^{n,1}$. Direct calculation shows
\begin{equation}
D_{i}D^{j}\Phi=-\hat{g}^{jm}\langle\hat{g}_{im}z,E_{n+1}\rangle=\Phi\delta_{i}^{j}.\label{eq:ddPhi}
\end{equation}
Recall $M$ can be parametrized by the Gauss map and the position vector $X(z)=Du(z)-u(z)z$. We know
\begin{equation}
\frac{1}{2}D_{i}\langle X,X\rangle=\langle b_{i}^{l}\partial_{l}z,Du-uz\rangle=b_{i}^{m}u_{m}.\label{eq:D|X|}
\end{equation}

Let $c_{0}$, $c_{1}$ and $c_{2}$ be constants. We establish an integral equality on an open subset of the hyperboloid if $\langle X,X \rangle$ and $u$ are constant on the boundary.
\begin{lem}\label{lem:intOmega}
If $\langle X,X\rangle=c_{1}$ and $u=c_{2}$ on $\partial\Omega$, then, for any $l\in\{1,...,n\}$, the following integral equality holds
\begin{align*}
&\frac{l+1}{2}\int_{\Omega}\left(\langle X,X\rangle-c_{1}\right)\sigma_{n-l-1}(B)\Phi d\hat{\mu}\\
&\qquad=(n-l)\int_{\Omega}(u-c_{2})\sigma_{n-l}(B)\Phi d\hat{\mu}+\int_{\Omega}(u-c_{2})\hat{g}(D\left(\sigma_{n-l}(B)\right),D\Phi)d\hat{\mu},
\end{align*}
where $d\hat{\mu}$ is the volume form of $\Sigma$.
\end{lem}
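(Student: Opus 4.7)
The plan is to express the integrand difference (left side minus right side of the claimed identity) as a sum of three total divergences and apply the divergence theorem on $\Omega$, arranging each term so that its boundary flux is killed by one of the hypotheses $\langle X,X\rangle = c_1$ or $u = c_2$. The three vector fields I will use are
\[
V^i = \tfrac{1}{2}(\langle X,X\rangle - c_1)(\sigma_{n-l}(B))_j^i D^j\Phi, \qquad Y^i = (u-c_2)(\sigma_{n-l+1}(B))_j^i D^j\Phi, \qquad Z^j = (u-c_2)\sigma_{n-l}(B) D^j\Phi,
\]
whose boundary fluxes on $\partial\Omega$ vanish from $\langle X,X\rangle = c_1$ (for $V$) and from $u = c_2$ (for $Y$ and $Z$). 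The driving ingredients in the divergence computations are $D_iD^j\Phi = \Phi\delta_i^j$ from \eqref{eq:ddPhi} (so in particular $\Delta\Phi = n\Phi$), the gradient formula $\tfrac{1}{2}D_i\langle X,X\rangle = b_i^m u_m$ from \eqref{eq:D|X|}, the divergence-free relation \eqref{eq:divB}, and Proposition \ref{prop:sigmak}(ii) applied to $B$ with $k = n-l$ and $k = n-l+1$, producing the numerical factors $l+1$ and $l$ respectively.

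The one genuinely nontrivial algebraic input, and the step I expect to be the main obstacle, is the Newton-type matrix identity
\[
b_i^m(\sigma_{n-l}(B))_j^i = \delta_j^m\,\sigma_{n-l}(B) - (\sigma_{n-l+1}(B))_j^m.
\]
I would verify it by diagonalizing $B$ in an $\hat{g}$-orthonormal frame (legitimate since $b_{ij} = D_iu_j - u\hat{g}_{ij}$ is $\hat{g}$-symmetric), reducing both sides to eigenvalue expressions, and invoking the elementary recursion $\mu_m\sigma_{n-l-1}(\mu|m) = \sigma_{n-l}(\mu) - \sigma_{n-l}(\mu|m)$; tensoriality of the two sides then extends the formula from the diagonalizing frame to any frame.

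With the identity in hand, the assembly is three short integration-by-parts steps. Expanding $D_iV^i$ using \eqref{eq:divB}, \eqref{eq:ddPhi}, \eqref{eq:D|X|}, and the trace $(\sigma_{n-l}(B))_j^i\delta_i^j = (l+1)\sigma_{n-l-1}(B)$, the divergence theorem for $V^i$ converts the left-hand side of the lemma into $-\int_\Omega u_m b_i^m(\sigma_{n-l}(B))_j^i D^j\Phi\, d\hat{\mu}$. The Newton identity splits this as
\[
-\int_\Omega \sigma_{n-l}(B)\,\hat{g}(Du,D\Phi)\,d\hat{\mu} \;+\; \int_\Omega u_m(\sigma_{n-l+1}(B))_j^m D^j\Phi\,d\hat{\mu}.
\]
Next, the divergence theorem applied to $Z^j$, together with $\Delta\Phi = n\Phi$, reshapes the first summand into $\int_\Omega (u-c_2)\hat{g}(D\sigma_{n-l}(B),D\Phi)\,d\hat{\mu} + n\int_\Omega(u-c_2)\sigma_{n-l}(B)\Phi\,d\hat{\mu}$. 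The divergence theorem applied to $Y^i$, together with \eqref{eq:divB} and the trace $(\sigma_{n-l+1}(B))_j^i\delta_i^j = l\,\sigma_{n-l}(B)$, reshapes the second summand into $-l\int_\Omega(u-c_2)\sigma_{n-l}(B)\Phi\,d\hat{\mu}$. Adding these two contributions collapses the coefficient in front of $\int_\Omega(u-c_2)\sigma_{n-l}(B)\Phi\,d\hat{\mu}$ into exactly $n-l$, so the displayed identity emerges. All remaining steps are straightforward bookkeeping; no analytic difficulty is expected beyond the Newton identity, because the boundary hypotheses are designed precisely to annihilate every surface integral that appears.
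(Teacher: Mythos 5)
Your proof is correct, and its first half coincides with the paper's: the paper also integrates by parts against the vector field $\tfrac{1}{2}(\langle X,X\rangle-c_{1})(\sigma_{n-l}(B))_{j}^{i}D^{j}\Phi$, using \eqref{eq:divB}, \eqref{eq:ddPhi} and the boundary condition $\langle X,X\rangle=c_{1}$ to reduce the left-hand side to $-\int_{\Omega}u_{m}b_{i}^{m}(\sigma_{n-l}(B))_{j}^{i}D^{j}\Phi\,d\hat{\mu}$. The two arguments part ways at that point. The paper performs a single further integration by parts: it rewrites $b_{i}^{m}(\sigma_{n-l}(B))_{j}^{i}=b_{j}^{i}(\sigma_{n-l}(B))_{i}^{m}$ (legitimate since the Newton tensor is a polynomial in $B$), throws the $u$-derivative onto that whole product, and evaluates the resulting divergence $D_{m}\bigl(b_{j}^{i}(\sigma_{n-l}(B))_{i}^{m}D^{j}\Phi\bigr)$ using the Codazzi-type relation \eqref{eq:Db} together with the chain rule $(\sigma_{n-l}(B))_{i}^{m}D_{j}b_{m}^{i}=D_{j}\sigma_{n-l}(B)$, obtaining $\hat{g}(D\sigma_{n-l}(B),D\Phi)+(n-l)\sigma_{n-l}(B)\Phi$ in one stroke. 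You instead decompose the integrand first, via the Newton recursion $b_{i}^{m}(\sigma_{n-l}(B))_{j}^{i}=\sigma_{n-l}(B)\delta_{j}^{m}-(\sigma_{n-l+1}(B))_{j}^{m}$, and then integrate each piece by parts separately, letting the coefficients $n$ and $-l$ recombine into $n-l$. Both routes are valid and of essentially equal length. Yours trades the pointwise use of \eqref{eq:Db} for the algebraic recursion --- which is standard rather than an obstacle: it is exactly the identity whose traces yield Proposition \ref{prop:sigmak}(i) and (iii), and your diagonalization argument for it is sound because $b_{ij}=D_{i}D_{j}u-u\hat{g}_{ij}$ is symmetric --- at the cost of one extra vector field and the bookkeeping of two integrals. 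Note, though, that \eqref{eq:divB}, which you still invoke for $V$ and $Y$, is itself a consequence of \eqref{eq:Db}, so no structural input is actually avoided. The remaining details check out, including the traces $(\sigma_{n-l}(B))_{j}^{i}\delta_{i}^{j}=(l+1)\sigma_{n-l-1}(B)$ and $(\sigma_{n-l+1}(B))_{j}^{i}\delta_{i}^{j}=l\,\sigma_{n-l}(B)$ and the use of \eqref{eq:D|X|}.
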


\begin{proof}
From Proposition \ref{prop:sigmak} (ii) and \eqref{eq:ddPhi}, we have
\begin{equation}\label{eq:l+1sigmaB}
(l+1)\sigma_{n-l-1}(B)\Phi=\left(\sigma_{n-l}(B)\right)_{j}^{i}D_{i}D^{j}\Phi.
\end{equation}
Using \eqref{eq:l+1sigmaB} and \eqref{eq:divB}, we know
\begin{align*}
&\frac{l+1}{2}\int_{\Omega}\left(\langle X,X\rangle-c_{1}\right)\sigma_{n-l-1}(B)\Phi d\hat{\mu}=\frac{1}{2}\int_{\Omega}\left(\langle X,X\rangle-c_{1}\right)\left(\sigma_{n-l}(B)\right)_{j}^{i}D_{i}D^{j}\Phi d\hat{\mu}\\
&\qquad=\frac{1}{2}\int_{\Omega}D_{i}\left((\langle X,X\rangle-c_{1})\left(\sigma_{n-l}(B)\right)_{j}^{i}D^{j}\Phi\right)d\hat{\mu}-\frac{1}{2}\int_{\Omega}D_{i}\langle X,X\rangle\left(\sigma_{n-l}(B)\right)_{j}^{i}D^{j}\Phi d\hat{\mu}.
\end{align*}
Then the divergence theorem and $\langle X,X\rangle=c_{1}$ on $\partial\Omega$ imply
\begin{equation*}
\frac{l+1}{2}\int_{\Omega}\left(\langle X,X\rangle-c_{1}\right)\sigma_{n-l-1}(B)\Phi d\hat{\mu}=-\frac{1}{2}\int_{\Omega}D_{i}\langle X,X\rangle\left(\sigma_{n-l}(B)\right)_{j}^{i}D^{j}\Phi d\hat{\mu}.
\end{equation*}

Using \eqref{eq:D|X|}, $b_{i}^{m}\left(\sigma_{n-l}(B)\right)_{j}^{i}=b_{j}^{i}\left(\sigma_{n-l}(B)\right)_{i}^{m}$
and the divergence theorem, we have
\begin{align*}
&-\frac{1}{2}\int_{\Omega}D_{i}\langle X,X\rangle\left(\sigma_{n-l}(B)\right)_{j}^{i}D^{j}\Phi d\hat{\mu} =-\int_{\Omega}u_{m}b_{i}^{m}\left(\sigma_{n-l}(B)\right)_{j}^{i}D^{j}\Phi d\hat{\mu}\\
&\qquad =\int_{\Omega}(u-c_{2})D_{m}\left(b_{j}^{i}\left(\sigma_{n-l}(B)\right)_{i}^{m}D^{j}\Phi\right)d\hat{\mu}.
\end{align*}
Now, from \eqref{eq:divB}, \eqref{eq:ddPhi}, \eqref{eq:Db} and Proposition \ref{prop:sigmak} (ii), we have
\begin{align*}
D_{m}\left(b_{j}^{i}\left(\sigma_{n-l}(B)\right)_{i}^{m}D^{j}\Phi\right) & =D_{j}b_{m}^{i}\left(\sigma_{n-l}(B)\right)_{i}^{m}D^{j}\Phi+b_{j}^{i}\left(\sigma_{n-l}(B)\right)_{i}^{m}\delta_{m}^{j}\Phi\\
 & =\hat{g}(D\left(\sigma_{n-l}(B)\right),D\Phi)+(n-l)\sigma_{n-l}(B)\Phi.
\end{align*}

Combining these together, we finish the proof.
\end{proof}

The following lemma gives an integral equality on $M$, which is the key to the proof of Theorem \ref{thm}.
\begin{lem}\label{lem:intM}
If $\sigma_{k}(A)=c_{0}$ in $M$, $\langle X,X\rangle=c_{1}$ and $\langle X,N\rangle=c_{2}$ on $\partial M$, then the following equality holds
\begin{align*}
&\frac{1}{2}\int_{M}\left(\langle X,X\rangle-c_{1}\right)\left(c_{0}\sigma_{1}(A)-(k+1)\sigma_{k+1}(A)\right)\langle N,E_{n+1}\rangle d\mu\\
&\qquad =kc_{0}\int_{M}(\langle X,N\rangle-c_{2})\langle N,E_{n+1}\rangle d\mu,
\end{align*}
where $d\mu$ is the volume form of $M$.
\end{lem}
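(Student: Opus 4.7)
The plan is to pull both integrals back from $M$ to $\Omega := N(M) \subset \Sigma$ via the Gauss map, and then combine two instances of Lemma~\ref{lem:intOmega} with cleverly chosen parameters. Convexity makes $N\colon M \to \Omega$ a diffeomorphism, and from $g_{ij} = b_i^l b_j^m \hat g_{lm}$ one reads off $d\mu = \sigma_n(B)\, d\hat\mu$; under $z = N$ we have $\langle X,N\rangle = u$ and $\langle N, E_{n+1}\rangle = -\Phi$. The identity $\sigma_j(A) = \sigma_{n-j}(B)/\sigma_n(B)$ lets me rewrite the hypothesis $\sigma_k(A) = c_0$ as $\sigma_{n-k}(B) = c_0\,\sigma_n(B)$, and it converts the factor $(c_0 \sigma_1(A) - (k+1)\sigma_{k+1}(A))\sigma_n(B)$ into $c_0\sigma_{n-1}(B) - (k+1)\sigma_{n-k-1}(B)$. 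The boundary hypotheses become precisely $\langle X,X\rangle = c_1$ and $u = c_2$ on $\partial\Omega$, so Lemma~\ref{lem:intOmega} applies.

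After the change of variables, the claimed equality reduces to
\[
\frac{1}{2}\int_\Omega (\langle X,X\rangle - c_1)\bigl[c_0\sigma_{n-1}(B) - (k+1)\sigma_{n-k-1}(B)\bigr]\Phi\, d\hat\mu = k c_0 \int_\Omega (u-c_2)\sigma_n(B)\Phi\, d\hat\mu.
\]
To prove this I apply Lemma~\ref{lem:intOmega} twice. The $l=0$ case produces $\sigma_{n-1}(B)$ on the left and $n\int(u-c_2)\sigma_n(B)\Phi\, d\hat\mu + \int(u-c_2)\hat g(D\sigma_n(B),D\Phi)\, d\hat\mu$ on the right. The $l=k$ case produces $\sigma_{n-k-1}(B)$ on the left and, after substituting $\sigma_{n-k}(B) = c_0 \sigma_n(B)$ (and hence $D\sigma_{n-k}(B) = c_0\, D\sigma_n(B)$), the right-hand side $(n-k)c_0\int(u-c_2)\sigma_n(B)\Phi\, d\hat\mu + c_0\int(u-c_2)\hat g(D\sigma_n(B),D\Phi)\, d\hat\mu$. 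Taking $c_0$ times the $l=0$ identity and subtracting the $l=k$ identity, the $\hat g(D\sigma_n(B),D\Phi)$ terms cancel and the coefficient of $\int(u-c_2)\sigma_n(B)\Phi\, d\hat\mu$ on the right becomes $c_0(n-(n-k)) = k c_0$, exactly as required.

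The only nontrivial step is spotting the combination $c_0\cdot(l=0) - (l=k)$ that kills the gradient term $\hat g(D\sigma_n(B),D\Phi)$: there is no such term on the target left-hand side, so it must be eliminated, and this is the unique linear combination of the two instances of Lemma~\ref{lem:intOmega} that simultaneously achieves this cancellation and matches the $\sigma_{n-1}(B)$ and $\sigma_{n-k-1}(B)$ that appear on the left. Once this algebraic observation is made, the rest is a routine Gauss-map change of variables together with the $A \leftrightarrow B$ dictionary for elementary symmetric polynomials.
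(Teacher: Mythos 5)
Your proposal is correct and follows essentially the same route as the paper's proof: apply Lemma \ref{lem:intOmega} with $l=0$ and $l=k$, use $\sigma_{n-k}(B)=c_{0}\sigma_{n}(B)$ (hence $D\sigma_{n-k}(B)=c_{0}D\sigma_{n}(B)$) to cancel the gradient terms via the combination $c_{0}\cdot(l=0)-(l=k)$, and transfer between $M$ and $N(M)$ with the dictionary $d\mu=\sigma_{n}(B)\,d\hat{\mu}$, $u=\langle X,N\rangle$, $\Phi=-\langle N,E_{n+1}\rangle$, $\sigma_{j}(A)=\sigma_{n-j}(B)/\sigma_{n}(B)$. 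The only difference is presentational (the paper states the combined identity on $\Omega$ first and then pulls back, while you pull back first), which changes nothing.
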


\begin{proof}
Taking $l=0$ and $l=k$ in Lemma \ref{lem:intOmega}, we have
\begin{align*}
&\frac{1}{2}\int_{\Omega}\left(\langle X,X\rangle-c_{1}\right)\sigma_{n-1}(B)\Phi d\hat{\mu}\\
&\qquad =n\int_{\Omega}(u-c_{2})\sigma_{n}(B)\Phi d\hat{\mu}+\int_{\Omega}(u-c_{2})\hat{g}(D\left(\sigma_{n}(B)\right),D\Phi)d\hat{\mu}
\end{align*}
and
\begin{align*}
&\frac{k+1}{2}\int_{\Omega}\left(\langle X,X\rangle-c_{1}\right)\sigma_{n-k-1}(B)\Phi d\hat{\mu}\\
&\qquad =(n-k)\int_{\Omega}(u-c_{2})\sigma_{n-k}(B)\Phi d\hat{\mu}+\int_{\Omega}(u-c_{2})\hat{g}(D\left(\sigma_{n-k}(B)\right),D\Phi)d\hat{\mu}.
\end{align*}

The last terms of the above equalities can be eliminated by 
\[
D\left(\sigma_{n-k}(B)\right)=c_{0}D\left(\sigma_{n}(B)\right),
\]
which is from $\sigma_{k}(A)=c_{0}$. Then we obtain
\begin{align*}
& \frac{1}{2}\int_{\Omega}\left(\langle X,X\rangle-c_{1}\right)\left((k+1)\sigma_{n-k-1}(B)-c_{0}\sigma_{n-1}(B)\right)\Phi d\hat{\mu}\\
& \qquad=\int_{\Omega}(u-c_{2})\left((n-k)\sigma_{n-k}(B)-nc_{0}\sigma_{n}(B)\right)\Phi d\hat{\mu}.
\end{align*}

Now, we take $\Omega=N(M)$. By pulling back the integrals to $M$ via the Gauss map, we know
\begin{align*}
 & -\frac{1}{2}\int_{M}\left(\langle X,X\rangle-c_{1}\right)\left((k+1)\sigma_{k+1}(A)-c_{0}\sigma_{1}(A)\right)\langle N,E_{n+1}\rangle d\mu\\
 & \qquad=-\int_{\Omega}(\langle X,N\rangle-c_{2})\left((n-k)\sigma_{k}(A)-nc_{0}\right)\langle N,E_{n+1}\rangle d\mu\\
 & \qquad=kc_{0}\int_{\Omega}(\langle X,N\rangle-c_{2})\langle N,E_{n+1}\rangle d\mu.
\end{align*}
\end{proof}

\section{Proof of Theorem \ref{thm}}

After a translation and rescaling, we may assume, without loss of generality, that
\[
\sigma_{k}(A)=\binom{n}{k}\quad\text{in }M
\]
and $\partial M$ is in a hyperboloid or lightcone
\[
\widetilde{\Sigma}=\left\{ x\in\mathbb{R}^{n,1}|\langle x,x\rangle=c_{1}\leq 0\text{ and }x_{n+1}>0\right\} .
\]
This gives $\langle X,X\rangle=c_{1}$ on $\partial M$. Then the
condition that $\theta$ is constant means $\langle X,N\rangle=c_{2}$
on $\partial M$.

Assumption that $M$ is on $\widetilde{\Sigma}$ implies 
\[
\langle X,X\rangle\leq c_{1}\leq 0\text{ and }X\in\mathbb{R}_{+}^{n,1}\quad\text{in }M.
\]
Then Proposition \ref{prop:vect} indicates that $\langle X,N\rangle<0$ in $M$.

Let $P=\frac{1}{2}\langle X,X\rangle-\langle X,N\rangle$. Direct calculations show
\[
\nabla_{j}P=\partial_{j}P=\langle X,\partial_{j}X\rangle-\langle X,h_{j}^{l}\partial_{l}X\rangle
\]
and
\begin{align*}
\nabla_{i}\nabla_{j}P & =\partial_{i}\partial_{j}P-\Gamma_{ij}^{l}\partial_{l}P\\
& =g_{ij}+\langle X,\Gamma_{ij}^{l}\partial_{l}X+h_{ij}N\rangle-h_{ij}-\langle X,\partial_{i}\left(h_{j}^{l}\partial_{l}X\right)\rangle\\
& \quad-\Gamma_{ij}^{l}\left(\langle X,\partial_{l}X\rangle-\langle X,h_{l}^{m}\partial_{m}X\rangle\right)\\
& =g_{ij}+h_{ij}\langle X,N\rangle-h_{ij}-\langle X,\nabla_{i}h_{j}^{l}\partial_{l}X\rangle-h_{j}^{l}h_{il}\langle X,N\rangle.
\end{align*}

Using Proposition \ref{prop:sigmak} and the Codazzi equation, we have
\begin{align*}
\left(\sigma_{k}(A)\right)_{i}^{j}\nabla^{i}\nabla_{j}P & =\left(\sigma_{k}(A)\right)_{i}^{j}(\delta_{j}^{i}+h_{j}^{i}\langle X,N\rangle-h_{j}^{i}-\langle X,\nabla^{i}h_{j}^{l}\partial_{l}X\rangle-h_{j}^{l}h_{l}^{i}\langle X,N\rangle)\\
& =(n-k+1)\sigma_{k-1}(A)+k\sigma_{k}(A)\langle X,N\rangle-k\sigma_{k}(A)-\langle X,\nabla^{l}\sigma_{k}(A)\partial_{l}X\rangle\\
& \quad-\left(\sigma_{1}(A)\sigma_{k}(A)-(k+1)\sigma_{k+1}(A)\right)\langle X,N\rangle\\
& =-\langle X,\nabla\left(\sigma_{k}(A)\right)\rangle+(n-k+1)\sigma_{k-1}(A)-k\sigma_{k}(A)\\
& \quad-\left(\sigma_{1}(A)\sigma_{k}(A)-(k+1)\sigma_{k+1}(A)-k\sigma_{k}(A)\right)\langle X,N\rangle.
\end{align*}

From $\sigma_{k}(A)=\binom{n}{k}$, we have $\nabla\left(\sigma_{k}(A)\right)=0$. Moreover, the Newton-MacLaurin inequalities imply
\[
\frac{\sigma_{k-1}}{\binom{n}{k-1}}\geq\left(\frac{\sigma_{k}}{\binom{n}{k}}\right)^{\frac{k-1}{k}}=\frac{\sigma_{k}}{\binom{n}{k}}
\]
and
\[
\frac{\sigma_{k+1}}{\binom{n}{k+1}}\leq\left(\frac{\sigma_{k}}{\binom{n}{k}}\right)^{\frac{k+1}{k}}=\frac{\sigma_{k}}{\binom{n}{k}}=\left(\frac{\sigma_{k}}{\binom{n}{k}}\right)^{\frac{1}{k}}\leq\frac{\sigma_{1}}{n}.
\]
Consequently, \[ (n-k+1)\sigma_{k-1}(A)-k\sigma_{k}(A)\geq 0 \] and 
\begin{equation}\label{eq:sigma1}
\sigma_{1}(A)\sigma_{k}(A)-(k+1)\sigma_{k+1}(A)-k\sigma_{k}(A)\geq 0.
\end{equation}
Here the equalities occur if and only if $h_{i}^{j}=\delta_{i}^{j}$ for $i,j\in\{1,...,n\}$.

Combining these together, we have
\[
\left(\sigma_{k}(A)\right)_{i}^{j}\nabla^{i}\nabla_{j}P\geq0.
\]
Then the strong maximum principle implies that, either $P<\frac{1}{2}c_{1}-c_{2}$ in $M$, or $P\equiv\frac{1}{2}c_{1}-c_{2}$ in $M\cup\partial M$.

Now we exclude the former. If $P<\frac{1}{2}c_{1}-c_{2}$ in $M$, we have
\[
\int_{M}(\langle X,N\rangle-c_{2})\langle N,E_{n+1}\rangle d\mu<\frac{1}{2}\int_{M}\left(\langle X,X\rangle-c_{1}\right)\langle N,E_{n+1}\rangle d\mu,
\]
since $\langle N,E_{n+1}\rangle<0$. Lemma \ref{lem:intM} implies
\begin{equation}
\int_{M}\left(\langle X,X\rangle-c_{1}\right)\left(\binom{n}{k}\sigma_{1}(A)-(k+1)\sigma_{k+1}(A)-k\binom{n}{k}\right)\langle N,E_{n+1}\rangle d\mu<0.\label{eq:intM<0}
\end{equation}
However, from \eqref{eq:sigma1}, $\langle X,X\rangle\leq c_{1}$ and $\langle N,E_{n+1}\rangle<0$, we know
\[
\left(\langle X,X\rangle-c_{1}\right)\left(\binom{n}{k}\sigma_{1}(A)-(k+1)\sigma_{k+1}(A)-k\binom{n}{k}\right)\langle N,E_{n+1}\rangle\geq0,
\]
which contradicts \eqref{eq:intM<0}.

Hence $P\equiv\frac{1}{2}c_{1}-c_{2}$ in $M$. As a result, principal curvatures $\lambda_{1}=\cdots=\lambda_{n}=1$ everywhere in $M$. This indicates that $M$ is a hyperboloid.

\end{document}